\numberwithin{equation}{section}
\newtheorem{Thm}{Theorem}[section]
\newtheorem{Prop}[Thm]{Proposition}
\newtheorem{Lem}[Thm]{Lemma}
\theoremstyle{definition}
\newtheorem{Rem}[Thm]{Remark}
\newtheorem{Expl}[Thm]{Example}
\newcommand{\R}{\mathbb{R}}
\newcommand{\E}[1]{{\rm E}(#1)}
\newcommand{\e}{{\rm e}}
\newcommand{\eps}{\varepsilon}
\newcommand{\es}{\emptyset}
\newcommand{\olReal}{\overline{\R}}
\renewcommand{\rho}{\varrho}
\newcommand{\sm}{\setminus}
\title{Injective Subsets of $l_{\infty}(I)$}
\author{
Dominic Descombes and
Ma\"el Pav\' on}
\address{Department of Mathematics, ETH Z\"urich, 8092 Z\"urich, Switzerland}
\email{dominic.descombes@math.ethz.ch}
\email{mael.pavon@math.ethz.ch}
\date{\today}
\begin{document}

\maketitle
\begin{abstract}
We give an explicit characterization of all injective subsets of the model space $l_{\infty}(I)$ for a general set $I$, in terms of inequalities involving $1$-Lipschitz functions. Since the class of all injective metric spaces coincides with the one of all  absolute $1$-Lipschitz retracts, the present work yields a characterization of all the subsets of $l_{\infty}(I)$ that are absolute $1$-Lipschitz retracts.
\end{abstract}


\section{Introduction}

A metric space $(X,d)$ is said to be {\em injective} if for every isometric embedding $i \colon A \to B$ of metric spaces $(A,d_A),(B,d_B)$ and every $1$-Lipschitz map $f \colon A \to X$, there exists a $1$-Lipschitz map $\overline{\!f}$ such that $f = \overline{\!f} \circ i$. Basic examples of injective metric spaces include $\R$, the space $l_{\infty}(I)$ for any set $I$, as well as $\R$-trees. By $l_{\infty}(I)$ is meant the space of all real-valued bounded functions $f$ endowed with the norm $\| f \|_\infty:=\sup_{i \in I} |f_i|$ which we denote by $\| f \|$ for notational convenience. In the context of subsets of $l_{\infty}(I)$, we need to introduce some pieces of notation. Let moreover $\widehat{\pi}_i \colon l_{\infty}(I) \to  l_{\infty}(I \setminus \{ i \})$ be the map given by dropping the $i$-th coordinate. 

This work proves a metric characterization of the injective subsets of $l_{\infty}(I)$ in terms of systems of inequalities given by $1$-Lipschitz functions. Namely,

\begin{Thm}\label{Thm:ISLI-CharacterizationTheorem}
A non-empty subset $Q$ of $l_{\infty}(I)$ is injective if and only if it can be written as
\begin{equation}\label{eq:ISLI-ExpressionCharacterization}
Q = \{ x\in l_{\infty}(I) :  (\underline{r}_i \circ \widehat{\pi}_i) \leq x_i \leq  (\overline{r}_i \circ \widehat{\pi}_i) \text{ for all } i\in I \} 
\end{equation}
where $\underline{r\!}\,_i, \bar{r}_i : l_{\infty}(I \setminus \{i\}) \rightarrow \R$ are $1$-Lipschitz maps satisfying $\underline{r\!}\,_i \leq\bar{r}_i $, that is $\underline{r}_i(y) \le \overline{r}_i(y)$ for all $y \in l_{\infty}(I \setminus \{i\})$, possibly dropping a subcollection of the collection of all inequalities appearing in~\eqref{eq:ISLI-ExpressionCharacterization}.
\end{Thm}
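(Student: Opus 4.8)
The plan is to prove both implications through the equivalent notion of a $1$-Lipschitz retract. Since $l_\infty(I)$ is itself injective and, as recalled above, injective spaces coincide with absolute $1$-Lipschitz retracts, a standard argument shows that a non-empty $Q\subseteq l_\infty(I)$ is injective if and only if there is a $1$-Lipschitz retraction $\rho\colon l_\infty(I)\to Q$: one direction extends $\id_Q$ along the inclusion $Q\hookrightarrow l_\infty(I)$, the other composes a $1$-Lipschitz extension into $l_\infty(I)$ with $\rho$. I will also use the classical equivalence of injectivity with hyperconvexity (Aronszajn--Panitchpakdi). Throughout, for $i\in I$, $y\in l_\infty(I\setminus\{i\})$ and $t\in\R$ I write $y\ast_i t$ for the point of $l_\infty(I)$ whose $i$-th coordinate is $t$ and whose other coordinates are those of $y$, so that $\widehat{\pi}_i(y\ast_i t)=y$.

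\textbf{Sufficiency.} Assume $Q$ has the form~\eqref{eq:ISLI-ExpressionCharacterization}; I will show $Q$ is hyperconvex. Let $\{B(x_\alpha,s_\alpha)\}_\alpha$ be a family of balls with $x_\alpha\in Q$ and $\|x_\alpha-x_\beta\|\le s_\alpha+s_\beta$ for all $\alpha,\beta$. As the metric is the supremum norm, their intersection in $l_\infty(I)$ is the box $B=\prod_{i\in I}[a_i,b_i]$ with $a_i=\sup_\alpha(x_{\alpha,i}-s_\alpha)$ and $b_i=\inf_\alpha(x_{\alpha,i}+s_\alpha)$, and the Helly condition gives $a_i\le b_i$ coordinatewise, so $B\neq\es$. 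It remains to produce a point of $B$ lying in $Q$. I obtain it as a fixed point of the non-expansive clamping operator $\Phi$ which adjusts each coordinate of a point $z$ into the moving interval $[\underline{r}_i(\widehat{\pi}_i z),\overline{r}_i(\widehat{\pi}_i z)]$ while respecting the box $B$; built from coordinatewise medians, $\Phi$ is $1$-Lipschitz in each entry and hence non-expansive. Restricting to a bounded sub-box containing some $x_\alpha$ and invariant under $\Phi$, and using that boxes are hyperconvex, the fixed-point theorem for non-expansive self-maps of hyperconvex spaces (Sine--Soardi) provides a fixed point, which by construction lies in $B\cap Q$. Hence $Q$ is hyperconvex, thus injective.

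\textbf{Necessity.} Assume $Q$ is injective and fix a $1$-Lipschitz retraction $\rho$. For $i\in I$ and $y\in l_\infty(I\setminus\{i\})$ set
\[
\overline{r}_i(y)=\sup_{t\in\R}\bigl(\rho(y\ast_i t)\bigr)_i,\qquad \underline{r}_i(y)=\inf_{t\in\R}\bigl(\rho(y\ast_i t)\bigr)_i.
\]
For fixed $t$ the map $y\mapsto(\rho(y\ast_i t))_i$ is $1$-Lipschitz, so each of $\overline{r}_i,\underline{r}_i$ is $1$-Lipschitz wherever finite, and clearly $\underline{r}_i\le\overline{r}_i$. The same comparison shows finiteness is all-or-nothing: if $\overline{r}_i(y_0)<\infty$ then $\overline{r}_i(y)\le\overline{r}_i(y_0)+\|y-y_0\|<\infty$ for every $y$, so $\overline{r}_i$ is either finite everywhere (a genuine $1$-Lipschitz bound) or identically $+\infty$, in which case the upper inequality is dropped, and symmetrically for $\underline{r}_i$. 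This is exactly the licence to omit a subcollection of inequalities. Evaluating at $t=x_i$ gives $(\rho(\widehat{\pi}_i x\ast_i x_i))_i=(\rho(x))_i=x_i$ for $x\in Q$, whence $\underline{r}_i(\widehat{\pi}_i x)\le x_i\le\overline{r}_i(\widehat{\pi}_i x)$; therefore $Q$ is contained in the set $S$ cut out by~\eqref{eq:ISLI-ExpressionCharacterization} with these data.

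The remaining, and decisive, step is the reverse inclusion $S\subseteq Q$. Given $x\in S$, the inequalities $\underline{r}_i(\widehat{\pi}_i x)\le x_i\le\overline{r}_i(\widehat{\pi}_i x)$ together with the intermediate value theorem applied to the $1$-Lipschitz function $t\mapsto(\rho(\widehat{\pi}_i x\ast_i t))_i$ produce, for each $i$, a parameter $\tau_i$ with $(\rho(\widehat{\pi}_i x\ast_i\tau_i))_i=x_i$; the points $q^i:=\rho(\widehat{\pi}_i x\ast_i\tau_i)\in Q$ match $x$ in the $i$-th coordinate and satisfy $\|q^i-\rho(x)\|\le|\tau_i-x_i|$. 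I expect this to be the main obstacle: one must combine the family $\{q^i\}$ with the hyperconvexity of $Q$, via a suitable Helly family of balls forcing a common point equal to $x$, to conclude $x\in Q$, and it is precisely here that injectivity must be used substantively rather than formally. I anticipate that the cleanest way to control the interaction between coordinates is to route this step through the injective-hull (tight span) apparatus, which packages the required $1$-Lipschitz estimates uniformly and renders the reverse inclusion transparent.
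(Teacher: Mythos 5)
Your sufficiency direction is correct in outline and genuinely different from the paper's. The paper (Proposition 3.3) first perturbs the bounds to $\lambda$-Lipschitz functions with $\lambda<1$ and runs a contraction-type iteration, then removes the perturbation via Baillon's intersection theorem, and finally handles infinite $I$ by a Zorn's lemma argument; you instead clamp all coordinates simultaneously and invoke the Sine--Soardi fixed point theorem once, on the box $B=\prod_{i\in I}[a_i,b_i]$. This is shorter, but the one computation that carries the load is missing from your sketch: you must check that $B$ is invariant under the clamping map $\Phi$. It is, and no auxiliary ``sub-box'' is needed: for $z\in B$ one has $\|z-x_\alpha\|\le s_\alpha$ for every $\alpha$, so $1$-Lipschitzness of the bounds and $x_\alpha\in Q$ give $\overline{r}_i(\widehat{\pi}_i z)\ge \overline{r}_i(\widehat{\pi}_i x_\alpha)-s_\alpha\ge x_{\alpha,i}-s_\alpha$ and $\underline{r}_i(\widehat{\pi}_i z)\le x_{\alpha,i}+s_\alpha$ for all $\alpha$, hence $\overline{r}_i(\widehat{\pi}_i z)\ge a_i$, $\underline{r}_i(\widehat{\pi}_i z)\le b_i$, and the median of $z_i$, $\underline{r}_i(\widehat{\pi}_i z)$, $\overline{r}_i(\widehat{\pi}_i z)$ stays in $[a_i,b_i]$. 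With that inserted, this half is complete.

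The necessity direction, however, has a fatal flaw, not just the gap you acknowledge: for your definition of the bounds via an arbitrary $1$-Lipschitz retraction $\rho$, the reverse inclusion $S\subseteq Q$ is simply false, so the step you defer cannot be completed by any argument. Concretely, take $I=\{1,2\}$, $Q=\{(t,t):t\in\R\}$ (injective, being isometric to $\R$), and $\rho(x_1,x_2)=\bigl(\tfrac{x_1+x_2}{2},\tfrac{x_1+x_2}{2}\bigr)$, which is a $1$-Lipschitz retraction onto $Q$ since $\bigl|\tfrac{(x_1-y_1)+(x_2-y_2)}{2}\bigr|\le\max\{|x_1-y_1|,|x_2-y_2|\}$. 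Then $(\rho(y\ast_1 t))_1=\tfrac{t+y}{2}$ is unbounded in $t$ in both directions, so $\overline{r}_1\equiv+\infty$ and $\underline{r}_1\equiv-\infty$, and symmetrically for $i=2$: all four inequalities are dropped and $S=\R^2\supsetneq Q$. The retraction ``spreads'' each line $\{y\ast_i t: t\in\R\}$ over all of $Q$, and the supremum/infimum over $t$ then records nothing about $Q$. Your fallback idea also cannot work in principle: hyperconvexity of $Q$ produces \emph{some} point in a Helly family of balls, but it can never force that point to equal a prescribed $x\notin Q$ (in the example, for $x=(0,1)$ your points are $q^1=(0,0)$, $q^2=(1,1)$, and no contradiction arises).

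This is precisely why the paper's Proposition 3.5 uses no retraction at all. There, for each $x\notin Q$ one exploits Isbell's theory (Remark 3.4): since $d_x\in\Delta(Q)$ is not extremal, there is a quantitative defect $\eps(x)>0$ and a witness $p_x\in Q$; choosing a coordinate $i$ with $|\pi_i(x-p_x)|\ge\|x-p_x\|-\delta$, one builds a cone $C_x=x\mp\alpha\eps(x)e^i\pm C_i$ which contains $x$ in its interior and is disjoint from $Q$ once $\alpha<\tfrac18$, and the bounds $\overline{r}_i,\underline{r}_i$ are defined as envelopes of the resulting $1$-Lipschitz cone functions; the same defect data then yields $\underline{r}_i\le\overline{r}_i$. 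Some such separation step, producing for every $x\notin Q$ a violated coordinate inequality, is exactly what your construction lacks, and the counterexample shows the lack is unavoidable rather than a matter of missing details.
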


The characterization is explicit, namely it provides a concrete expression for each single injective subset of each of the model spaces $l_{\infty}(I)$. The proof of this characterization is based on the proof by the first author of the characterization in the particular case where $I=\{1,\dots,n\}$, that is in the particular case where $l_{\infty}(I)$ corresponds to $\R^n$ endowed with the maximum norm cf. \cite{Des}. In the next section, we prove among others the equivalence between hyperconvexity and the absolute $1$-Lipschitz retract property.

\section{Preliminaries on Absolute 1-Lipschitz Retracts, Hyperconvexity and Isbell's Injective Hull}

Let us start by recalling two characterizations of injective metric spaces. A metric space $(X,d)$ is called an \emph{absolute $1$-Lipschitz retract (or $1$-ALR)} if for every isometric embedding $i \colon X \to Y$ into a metric space $Y$, there exists a $1$-Lipschitz retraction of $Y$ onto $i(X)$. To show that this property is equivalent to injectivity, assume first that $X$ is injective. If now $i \colon X \to Y$ is an isometric embedding, $i(X)$ is injective. Thus, the identity map on $i(X)$ extends to a $1$-Lipschitz retraction $\rho \colon Y \to i(X)$ showing that $X$ is a $1$-ALR. Conversely, every metric space $X$ embeds isometrically into $l_\infty(X)$ via a Kuratowski embedding $k_{x_0} \colon x \mapsto d_x - d_{x_0}$ for an arbitrarily chosen base point $x_0 \in X$. Hence, if $X$ is a $1$-ALR, then $k_{x_0}(X)$ is a $1$-Lipschitz retract in $l_\infty(X)$. Therefore, by injectivity of $l_\infty(X)$ it follows that $X$ is injective as well.

Another characterization of injective metric spaces relies upon an intersection property of metric balls. In a metric space $(X,d)$, we use the notation $B(x,r):= \{ y \in X : d(x,y) \le r\}$. One says that $(X,d)$ is \emph{hyperconvex} if every family $\{(x_\gamma,r_\gamma)\}_{\gamma \in \Gamma}$ in $X \times \R$  satisfying $r_\beta + r_\gamma \ge d(x_\beta,x_\gamma)$ for all pairs of indices $\beta,\gamma \in \Gamma$, has the property that $\bigcap_{\gamma \in \Gamma}B(x_\gamma,r_\gamma) \ne \es$. (As a matter of convention, the intersection equals $X$ if $\Gamma = \es$, so that hyperconvex spaces are non-empty by definition.) To see that injectivity implies hyperconvexity, let $\{(x_\gamma,r_\gamma)\}_{\gamma \in \Gamma} \in X \times \R$ be a family with $r_\beta + r_\gamma \ge d(x_\beta,x_\gamma)$ for all $\beta, \gamma \in \Gamma$. Let $A:= \{x_\gamma\}_{\gamma \in \Gamma}$ be endowed with the metric $d_A$ induced by $(X,d)$. Set $B := A \cup \{ b\}$ where $d_B(x_\gamma,b) := r_\gamma$. By our assumptions, $d_B$ defines a metric on $B$. By injectivity of $X$, there is a $1$-Lipschitz map $\bar{j} \colon B \to X$ such that the inclusions $j \colon A \to X$ and $i \colon A \to B$ satisfy $\bar{j} \circ i = j$. It follows that $\bar{j}(b) \in \bigcap_{\gamma \in \Gamma}B(x_\gamma,r_\gamma)$, which in turn shows that $X$ is hyperconvex. For the proof of the converse, note that if $f \colon A \to X $ is $1$-Lipschitz, $i \colon A \to B$ is an isometric embedding, and $b \in B \sm i(A)$, then $d_B(i(a),b) + d_B(i(a'),b) \ge d_A(a,a') \ge d(f(a),f(a'))$ for all $a,a' \in A$. Hence, if $X$ is hyperconvex, then $S:=\bigcap_{a \in A}B(f(a),d_B(i(a),b))$ is non-empty, and one obtains a $1$-Lipschitz map $f_b \colon i(A) \cup \{b\} \to X$ by setting $f_b(i(a)) := f(a)$ on $A$ and taking $f_b(b)$ to be any point in $S$. Using Zorn's lemma one can prove the existence of a map $\bar{f}$ with the desired properties, from which one deduces that $X$ is injective. A direct consequence of this characterization is that the intersection of any family of closed balls in an injective metric space is itself injective if and only if it is non-empty.

Isbell proved that every metric space $(X,d)$ possesses an {\em injective hull} $(i,Y)$ (later, when considering Isbell's injective hull $(\e, \mathrm{E}(X))$ we usually write $\mathrm{E}(X)$ for simplicity) which is unique up to isometry and minimal among injective spaces containing an isometric copy of $X$. This means that $i \colon X \to Y$ is an isometric embedding with the following property: whenever there is a metric space $Z$ and a $1$-Lipschitz map $h \colon Y \to Z$ so that $h\circ i$ is an isometric embedding, it follows that $h$ is an isometric embedding as well. 

We now need to give a short outline of some elements of Isbell's construction for later use. For a more comprehensive introduction to injective spaces and the construction of $\E{X}$, see for instance \cite{Lan}. Given a metric space $(X,d)$, we denote by $\R^X$ the vector space of real-valued functions defined on $X$ and we set
\begin{equation}\label{eq:ISLI-DeltaX}
\Delta(X) := \{ f \in \R^X : \text{$f(x) + f(y) \ge d(x,y)$ for all $x,y \in X$}\}.
\end{equation}
For $f,g \in \R^X$, the inequality $g\leq f$ means that $g(x)\leq f(x)$ for all $x$.
A function $f \in \Delta(X)$ is called {\em extremal\/} if there is no $g \le f$ in $\Delta(X)$ 
different from $f$. One can show that the collection $\E X $ of all extremal functions in $\Delta(X)$ is equivalently given by
\begin{equation}\label{eq:ISLI-EX}
\E X = \bigl\{ f \in \R^X : f(x) = \textstyle \sup_{y \in X}(d(x,y) - f(y)) \text{ for all } x \in X \bigr\}.
\end{equation}
Thus, $f \in \E X$ if and only if $f \in \Delta(X)$ and for every $x \in X$ and $\eps>0$, there is an $y \in X$ so that
\begin{equation}\label{eq:extremal}
f(x)+f(y)\leq d(x,y)+\eps .
\end{equation}
Applying the equation defining the members of $\E X $ twice, we obtain that for $f \in \E X$ and for all $x,x'\in X$, one has
\[
f(x)-d(x,x')=\sup_{y\in X}(d(x,y)-d(x,x')-f(y))\leq f(x').
\]
This implies that extremal functions are $1$-Lipschitz. One can also prove that the map $(f,g) \mapsto \sup_{x \in X}|f(x) - g(x)|$ endows $\E X$ with a metric. It follows from \eqref{eq:ISLI-DeltaX} that every function in $\Delta(X)$ is non-negative. It is easy to see that for each $x\in X$, the function $d_x$ given by the assignement $y \mapsto d(x,y)$ is extremal, and the functions of this form are the only extremal functions with zeros. One can show that $(\e,\E X)$ is in fact an injective hull of $X$. Thus, to \textit{every} metric space $(X,d)$ corresponds Isbell's injective hull $(\e, \E X)$ where $\E X$ isometrically embeds into $l_{\infty}(X)$ via a Kuratowski embedding.

\section{The Characterization}

\begin{Thm}[\cite{Bai,EspK}]\label{Thm:IS-TheoremBaillon}
If $\Theta$ is a totally ordered set and $(H_{\theta})_{\theta \in \Theta}$ is a decreasing family of nonempty bounded hyperconvex spaces, then the intersection is hyperconvex.
\end{Thm}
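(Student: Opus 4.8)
The plan is to deduce both the non-emptiness and the hyperconvexity of $H := \bigcap_{\theta \in \Theta} H_\theta$ from a single assertion: that \emph{every} decreasing family of non-empty bounded hyperconvex subsets of a hyperconvex space has non-empty intersection. As a harmless first reduction I would fix an index $\theta_0$ and note that, since the family is decreasing, $H = \bigcap_{\theta \geq \theta_0} H_\theta$ (the members with $\theta < \theta_0$ already contain $H_{\theta_0}$ and so impose no extra constraint). Thus all relevant members lie in the fixed bounded hyperconvex space $M := H_{\theta_0}$, and I may argue inside $M$.

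For the hyperconvexity part, given centres $x_\gamma \in H$ and radii $r_\gamma$ with $r_\beta + r_\gamma \geq d(x_\beta,x_\gamma)$, I would set $C_\theta := H_\theta \cap \bigcap_\gamma B(x_\gamma, r_\gamma)$. Since the centres lie in $H \subseteq H_\theta$ and the radii are compatible, hyperconvexity of $H_\theta$ gives $C_\theta \neq \es$; moreover $C_\theta$ is a non-empty intersection of closed balls of the injective space $H_\theta$, hence itself injective (that is, hyperconvex) by the observation recorded above, and it is bounded and decreasing in $\theta$. As $\bigcap_\gamma \bigl(H \cap B(x_\gamma,r_\gamma)\bigr) = \bigcap_\theta C_\theta$, the hyperconvexity of $H$ reduces to the non-emptiness assertion applied to $(C_\theta)_\theta$; taking $\Gamma = \es$ (so $C_\theta = H_\theta$) recovers $H \neq \es$ itself. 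So everything rests on non-emptiness.

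The transparent case is the one in which each $H_\theta$ is an intersection of closed balls of $M$, say $H_\theta = \bigcap_i B(c^\theta_i, s^\theta_i)$. Here I would pool \emph{all} of these balls into one collection $\mathscr{T}$, over all $\theta$ and $i$ simultaneously. Any two of them, arising from indices $\theta \leq \theta'$, both contain the non-empty set $H_{\theta'}$ (indeed $H_{\theta'} \subseteq H_\theta \subseteq B(c^\theta_i,s^\theta_i)$ and $H_{\theta'} \subseteq B(c^{\theta'}_j,s^{\theta'}_j)$), hence meet, which forces $s^\theta_i + s^{\theta'}_j \geq d(c^\theta_i, c^{\theta'}_j)$. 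Thus $\mathscr{T}$ is pairwise compatible, and hyperconvexity of $M$ yields $\es \neq \bigcap \mathscr{T} = \bigcap_\theta H_\theta$. Notably this uses only the total order and the hyperconvexity of $M$, not boundedness.

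The main obstacle is that a bounded hyperconvex subset need \emph{not} be an intersection of balls: the diagonal segment $\{(t,t) : t \in [0,1]\}$ is hyperconvex in $l_\infty(\{1,2\})$ yet is not cut out by balls (which are squares), nor is it a decreasing intersection of such. To handle general hyperconvex $H_\theta$ I would instead use that each is a $1$-Lipschitz retract of $M$, with retractions $\rho_\theta \colon M \to H_\theta$ satisfying $\rho_\theta \circ \rho_{\theta'} = \rho_{\theta'}$ whenever $\theta \leq \theta'$, and build a point of the intersection by transfinite recursion along a well-ordered cofinal subset of $\Theta$ (such a subset exists and, the family being decreasing, leaves $H$ unchanged), aiming at a common fixed point of all $\rho_\theta$. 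Boundedness of the $H_\theta$ is precisely what should keep the successive approximations in a bounded region and force convergence in the complete space $M$. The delicate point — and the one I expect to need the genuinely metric input behind \cite{Bai,EspK} — is the passage to the limit at stages of uncountable cofinality, where Cauchy/Cantor reasoning fails and one must feed the hyperconvexity of the partial intersections back into the construction.
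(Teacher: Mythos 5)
You have correctly isolated the two easy ingredients: the reduction of hyperconvexity of $H=\bigcap_\theta H_\theta$ to the non-emptiness statement (via the decreasing family $C_\theta = H_\theta\cap\bigcap_\gamma B(x_\gamma,r_\gamma)$, which is indeed non-empty, bounded and hyperconvex, since non-empty intersections of balls in an injective space are injective), and the pooling-of-balls argument when every $H_\theta$ happens to be an intersection of balls of a common ambient hyperconvex space. But the non-emptiness of $\bigcap_\theta H_\theta$ for \emph{general} bounded hyperconvex $H_\theta$ --- which, as you say yourself, is what everything rests on --- is precisely the content of Baillon's theorem, and your sketch does not prove it. Moreover, the mechanism you propose is not merely incomplete at uncountable cofinalities: it already fails at stage $\omega$, because boundedness does not force convergence of iterates of $1$-Lipschitz retractions. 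Concretely, let $M:=[0,1]^{\mathbb{N}}\subset l_\infty(\mathbb{N})$ and $H_n:=\{x\in M : x_1=\dots=x_n=0\}$, a decreasing sequence of non-empty bounded hyperconvex sets with $\bigcap_n H_n=\{0\}$. Define $1$-Lipschitz retractions $\rho_n\colon M\to H_n$ by setting the first $n$ coordinates to $0$, replacing $x_{n+1}$ by $\max(x_n,x_{n+1})$, and keeping all later coordinates; these are genuine nonexpansive retractions (each output coordinate is a $1$-Lipschitz function of $x$ for the sup norm) and they automatically satisfy your compatibility identity $\rho_n\circ\rho_m=\rho_m$ for $n\le m$. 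Yet starting from $x^{(0)}=(1,0,0,\dots)$, the recursion $x^{(n)}=\rho_n(x^{(n-1)})$ produces $x^{(n)}=e_{n+1}$, the $(n+1)$-st standard unit vector, so consecutive iterates stay at mutual distance $1$ and nothing converges. So "boundedness keeps the approximations in a bounded region and forces convergence in the complete space $M$" is exactly the unproved --- and, without a rule for choosing the retractions, false --- step; the gap you flag at the end is the whole theorem.

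For the record, the paper itself does not prove this statement; it imports it from Baillon and Esp\'inola--Khamsi. The known proof does use your pooling argument, but one level down, inside a Zorn's lemma argument: consider the poset of all decreasing families $(A_\theta)_\theta$ in which each $A_\theta$ is a non-empty \emph{admissible} subset of $H_\theta$ (an intersection of closed balls of $H_\theta$), ordered componentwise by inclusion. A chain in this poset has a lower bound: for each fixed $\theta$ one has a chain of ball-intersections inside the single hyperconvex space $H_\theta$, and exactly your pooling argument shows that the componentwise intersection is non-empty and again admissible. Zorn's lemma then yields a minimal family, and this is where boundedness genuinely enters: a Chebyshev-radius (diameter-halving) argument shows that a minimal family must consist of singletons, and since the family is decreasing these singletons all coincide, producing a point of $\bigcap_\theta H_\theta$. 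Your observation that a bounded hyperconvex subset need not be an intersection of balls (your diagonal example is correct) explains why Zorn's lemma is run over these auxiliary admissible families rather than over the $H_\theta$ themselves; it does not justify replacing that argument by a transfinite recursion on retractions.
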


Recall that for a metric space $(X,d)$, we write $B(x,r):= \{ y \in X : d(x,y) \le r\}$. A non-empty subset $A$ of a metric space $(X,d)$ is \emph{externally hyperconvex} in $X$ if for any collection $\{(x_\gamma,r_\gamma)\}_{\gamma \in \Gamma} \subset X \times [0,\infty)$ satisfying $d(x_\beta,x_\gamma) \leq r_\beta+r_\gamma$ and $d(x_\gamma,A)\leq r_\gamma$ for all $\beta, \gamma \in \Gamma$, one has $A\cap \bigcap_{\gamma\in \Gamma} B(x_\gamma,r_\gamma) \neq \emptyset$.

\begin{Lem}\label{Lem:injective-balls}
Let $(X,d)$ be a metric space and $x_0 \in X$. Then, the following are equivalent
\begin{enumerate}[(i)]
\item $(X,d)$ is injective,
\item $B(x_0,R)$ is injective for every $R \in [0,\infty)$.
\end{enumerate}
\end{Lem}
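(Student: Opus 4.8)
The plan is to prove the two implications separately, using throughout the equivalence between injectivity and hyperconvexity established in the previous section. The implication (i)$\Rightarrow$(ii) is immediate: if $(X,d)$ is injective, then $B(x_0,R)$ is the intersection of the one-element family of closed balls $\{(x_0,R)\}$, and it is non-empty since $x_0\in B(x_0,R)$ for every $R\in[0,\infty)$; hence, by the consequence recorded at the end of the balls-intersection discussion in Section~2, $B(x_0,R)$ is injective.

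For (ii)$\Rightarrow$(i) I would show that $X$ is hyperconvex. So let $\{(x_\gamma,r_\gamma)\}_{\gamma\in\Gamma}$ be a family in $X\times[0,\infty)$ with $r_\beta+r_\gamma\ge d(x_\beta,x_\gamma)$ for all $\beta,\gamma$, and aim to prove $\bigcap_{\gamma\in\Gamma}B(x_\gamma,r_\gamma)\ne\emptyset$ (the case $\Gamma=\emptyset$ being trivial). The first observation is a local solvability statement: for any subset $S\subseteq\Gamma$ whose centres are bounded, say $R_S:=\sup_{\gamma\in S}d(x_0,x_\gamma)<\infty$, all centres $x_\gamma$, $\gamma\in S$, lie in the injective, hence hyperconvex, ball $B(x_0,R_S)$, so applying hyperconvexity of $B(x_0,R_S)$ to the (admissible) subfamily $\{(x_\gamma,r_\gamma)\}_{\gamma\in S}$ yields $B(x_0,R_S)\cap\bigcap_{\gamma\in S}B(x_\gamma,r_\gamma)\ne\emptyset$; in particular $\bigcap_{\gamma\in S}B(x_\gamma,r_\gamma)\ne\emptyset$.

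The main obstacle is that the full family may have centres that are unbounded in $X$, so that no single ball $B(x_0,R)$ contains all the $x_\gamma$ and the problem cannot be localised in one step. To get around this I would filter the family by distance to $x_0$: for $R\ge 0$ put $\Gamma_R:=\{\gamma\in\Gamma : d(x_0,x_\gamma)\le R\}$ and $E_R:=\bigcap_{\gamma\in\Gamma_R}B(x_\gamma,r_\gamma)$. Fixing $\gamma_0\in\Gamma$ and restricting to $R\ge R_0:=d(x_0,x_{\gamma_0})$, each $E_R$ is non-empty by the local solvability above, is bounded (being contained in $B(x_{\gamma_0},r_{\gamma_0})$), and the family $(E_R)_{R\ge R_0}$ is decreasing in $R$ with $\bigcap_{R\ge R_0}E_R=\bigcap_{\gamma\in\Gamma}B(x_\gamma,r_\gamma)$.

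The crux is then to show that each $E_R$ is itself hyperconvex, after which Theorem~\ref{Thm:IS-TheoremBaillon} applied to the totally ordered decreasing family $(E_R)_{R\ge R_0}$ of non-empty bounded hyperconvex spaces gives $\bigcap_{R\ge R_0}E_R\ne\emptyset$, completing the proof. To verify hyperconvexity of $E_R$, I would test it against an admissible family $\{(y_\delta,t_\delta)\}_\delta$ with $y_\delta\in E_R$ and merge it with the defining family: the combined family $\{(x_\gamma,r_\gamma)\}_{\gamma\in\Gamma_R}\cup\{(y_\delta,t_\delta)\}_\delta$ is again admissible in $X$, the only non-trivial inequalities being the cross terms $r_\gamma+t_\delta\ge d(x_\gamma,y_\delta)$, which hold automatically because $y_\delta\in E_R\subseteq B(x_\gamma,r_\gamma)$ forces $d(x_\gamma,y_\delta)\le r_\gamma\le r_\gamma+t_\delta$. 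Since all centres of the combined family now lie in a common ball $B(x_0,R')$ with $R'=\max\{R,\sup_\delta d(x_0,y_\delta)\}<\infty$ (finite because $E_R$ is bounded), hyperconvexity of $B(x_0,R')$ produces a point witnessing $E_R\cap\bigcap_\delta B(y_\delta,t_\delta)\ne\emptyset$. This establishes hyperconvexity of $E_R$, and with it the whole argument; the heart of the matter is precisely this reduction of the hyperconvexity of an infinite intersection of balls with possibly unbounded centres to the hyperconvexity of a single large ball about $x_0$.
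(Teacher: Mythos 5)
Your proof is correct, but for the implication (ii)$\Rightarrow$(i) it takes a genuinely different route from the paper. The paper fixes one index $\alpha\in\Gamma$, replaces each ball by $A_\gamma:=B(x_\gamma,r_\gamma)\cap B(x_\alpha,r_\alpha)$, and then works entirely inside the single bounded ball $B(x_\alpha,r_\alpha)$: using a large hyperconvex ball $B(x_0,R_{\beta\gamma})$ containing the relevant centers, it checks that $B(x_\alpha,r_\alpha)$ is hyperconvex, that each $A_\gamma$ is \emph{externally hyperconvex} in $B(x_\alpha,r_\alpha)$, and that the $A_\gamma$ pairwise intersect, and then concludes $\bigcap_\gamma A_\gamma\neq\emptyset$ by invoking a Helly-type theorem of Miesch (\cite[Proposition 1.2]{Mie}). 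You instead handle the unboundedness of the set of centers by exhaustion: you filter $\Gamma$ into the bounded subfamilies $\Gamma_R$, prove each partial intersection $E_R$ is non-empty and hyperconvex via the merging trick (adjoining a test family to the defining family and solving everything in one large ball $B(x_0,R')$), and then apply Theorem~\ref{Thm:IS-TheoremBaillon} to the nested family $(E_R)_{R\ge R_0}$. All the individual steps check out: the cross-term inequalities $d(x_\gamma,y_\delta)\le r_\gamma\le r_\gamma+t_\delta$ are valid (admissibility forces $t_\delta\ge 0$), $E_R$ is bounded because it sits inside $B(x_{\gamma_0},r_{\gamma_0})$, and Baillon's theorem applies to the totally ordered index set $[R_0,\infty)$. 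The trade-off: the paper's argument is shorter but imports the notion of external hyperconvexity and an external result from \cite{Mie}, whereas your argument is self-contained relative to the paper, since Theorem~\ref{Thm:IS-TheoremBaillon} is already stated there and used again in the proof of Proposition~\ref{Prop:p1}; the price is a somewhat longer two-stage argument (hyperconvexity of each $E_R$, then the nested intersection).
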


\begin{proof}
It is clear that $(i)$ implies $(ii)$ since $X$ is hyperconvex. To prove the other implication, let $\{ (x_\gamma,r_\gamma) \}_{\gamma\in \Gamma} \subset X \times [0,\infty) $ be such that $d(x_\beta,x_\gamma) \leq r_\beta + r_\gamma$. Fix $\alpha \in \Gamma$ and for every $\beta, \gamma \in \Gamma$, define
\begin{align*}
A_\gamma &:= B(x_\gamma,r_\gamma) \cap B(x_\alpha,r_\alpha), \\
R_{\beta\gamma} &:= \max \{d(x_\alpha,x_\beta),d(x_\alpha,x_\gamma)\} + d(x_\alpha,x_0) + r_\alpha.
\end{align*}
One has $x_\beta,x_\gamma \in B(x_0,R_{\beta \gamma})$ and this last set is hyperconvex by assumption, it follows that 
\begin{enumerate}[(a)]
\item $B(x_\alpha,r_\alpha)$ is hyperconvex (since $B(x_\alpha,r_\alpha) \subset B(x_0,R_{\beta \gamma})$),
\item $A_\gamma$ is externally hyperconvex in $B(x_\alpha,r_\alpha)$ for each $\gamma \in \Gamma$,
\item $A_\beta \cap A_\gamma \neq \emptyset$ for every $\beta,\gamma \in I$.
\end{enumerate}
Hence, it follows by \cite[Proposition 1.2]{Mie} that $\bigcap_{\gamma \in \Gamma} B(x_\gamma,r_\gamma) = \bigcap_{\gamma \in \Gamma} A_\gamma \neq \emptyset$.
\end{proof}

For the characterization of Theorem~\ref{Thm:ISLI-CharacterizationTheorem} to hold, we may (as stated there) need to drop an arbitrary number of the inequalities. In order to treat all cases in a uniform way we do the following. Set $\olReal:=\{-\infty\}\cup\R\cup\{\infty\}$ and endow it with the obvious total order (also, for example $\max\{x,-\infty\} = x$ for all $x\in\R$). Then we allow the $1$-Lipschitz bounds $\underline{r\!}\,_i,\bar r_i$ to take values in $\olReal$. And a map $l_\infty(I\setminus\{i\})\to\olReal$ is called {\em $1$-Lipschitz} if it is either constant (allowing the image to be $\{-\infty\}$ or $\{\infty\}$) or it is real valued and $1$-Lipschitz in the usual sense. Now instead of dropping, say, a lower bound $\underline{r\!}\,_i$, we just set $\underline{r\!}\,_i=-\infty$ and the inequality $(\underline{r\!}\,_i \circ \widehat{\pi}_i)(x)  \le x_i$ means that no condition is imposed. The next proposition proves one of the two implications in Theorem~\ref{Thm:ISLI-CharacterizationTheorem}.

\begin{Prop}\label{Prop:p1}
For every $i\in I$ let $\underline{r\!}\,_i, \bar r_i \colon l_\infty(I\setminus\{i\})\to\olReal$ be a pair of $1$-Lipschitz functions such that $\underline{r\!}\,_i\leq\bar r_i$. Define
\begin{equation*}
Q := \left \{ x \in l_\infty(I)  : (\underline{r}_i \circ \widehat{\pi}_i)(x)  \le x_i \le (\overline{r}_i \circ \widehat{\pi}_i)(x) \text{ for all } i\in I \right \}
\end{equation*}
and assume that this set is non-empty. Then $Q$ is injective.
\end{Prop}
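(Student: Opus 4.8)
The plan is to realize $Q$ as the fixed-point set of a single $1$-Lipschitz self-map of $l_\infty(I)$ and then feed this into the theory of nonexpansive maps on hyperconvex spaces, using Lemma~\ref{Lem:injective-balls} to dispose of the unboundedness of $l_\infty(I)$. Concretely, I would define $\Phi\colon l_\infty(I)\to l_\infty(I)$ by clipping each coordinate into its own admissible interval,
\[
\Phi(x)_i := \min\bigl\{\max\{x_i,(\underline{r}_i\circ\widehat{\pi}_i)(x)\},(\overline{r}_i\circ\widehat{\pi}_i)(x)\bigr\},
\]
i.e. the median of $x_i$ and the two bounds; this is well defined because $\underline{r}_i\le\overline{r}_i$, and with the conventions for $\pm\infty$ a dropped bound simply turns the median into a one-sided $\min$ or $\max$. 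The point of the choice is that $x\in Q$ holds exactly when each $x_i$ already lies in $[(\underline{r}_i\circ\widehat{\pi}_i)(x),(\overline{r}_i\circ\widehat{\pi}_i)(x)]$, which is precisely the condition $\Phi(x)=x$; hence $\Fix\Phi=Q$.

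First I would verify that $\Phi$ is a genuine $1$-Lipschitz self-map. The elementary fact that $(t,a,b)\mapsto\min\{\max\{t,a\},b\}$ is $1$-Lipschitz jointly in $(t,a,b)$ for the maximum norm, combined with $\widehat{\pi}_i$, $\underline{r}_i$, $\overline{r}_i$ being $1$-Lipschitz, gives $|\Phi(x)_i-\Phi(x')_i|\le\|x-x'\|$ for every $i$, and therefore $\|\Phi(x)-\Phi(x')\|\le\|x-x'\|$; the $\olReal$-valued cases reduce to the one-sided clips and obey the same bound. To see that $\Phi(x)$ is bounded I would fix a point $p\in Q$, which exists since $Q\neq\es$, and use $\Phi(p)=p$ together with the previous estimate to get $\|\Phi(x)-p\|\le\|x-p\|<\infty$, so indeed $\Phi(x)\in l_\infty(I)$.

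With $Q=\Fix\Phi$ in hand, I would localize to balls. For each $R\in[0,\infty)$ the ball $B(p,R)$ is a nonempty intersection of closed balls in the injective space $l_\infty(I)$, hence is bounded, hyperconvex and injective. Because $\Phi(p)=p$ and $\Phi$ is $1$-Lipschitz, $\|\Phi(x)-p\|\le\|x-p\|\le R$ for $x\in B(p,R)$, so $\Phi$ restricts to a nonexpansive self-map of $B(p,R)$; its fixed-point set is $\Fix\Phi\cap B(p,R)=Q\cap B(p,R)=B_Q(p,R)$, the $R$-ball of $Q$ about $p$, and it is nonempty since it contains $p$. Invoking the result that the fixed-point set of a nonexpansive self-map of a bounded hyperconvex space is itself hyperconvex (a theorem of Baillon, \cite{Bai}), I conclude that $B_Q(p,R)$ is injective for every $R$. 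Applying Lemma~\ref{Lem:injective-balls} to the metric space $Q$ with base point $p$ then yields that $Q$ is injective, which is the claim.

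The routine ingredients here are the Lipschitz estimate for the clip and the identity $\Fix\Phi=Q$; the genuine content sits in the fixed-point-set step, and I expect that to be the main obstacle. This is exactly where Theorem~\ref{Thm:IS-TheoremBaillon} is meant to be used: the standard way to show that the (nonempty) fixed-point set of a nonexpansive self-map of a bounded hyperconvex space is hyperconvex is to exhibit it as a decreasing transfinite intersection of nonempty bounded hyperconvex $\Phi$-invariant admissible sets and to apply the decreasing-intersection theorem to keep the limit nonempty and hyperconvex. Note that I never need the existence half of the fixed-point theorem, since nonemptiness of $B_Q(p,R)$ is automatic from $p\in Q$; only the structural half, that the fixed-point set is hyperconvex, is required.
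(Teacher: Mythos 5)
Your proof is correct, but it takes a genuinely different route from the paper. You collapse everything into a single parallel clip map $\Phi$ with $\Fix(\Phi)=Q$, restrict it to balls $B(p,R)$ around a point $p\in Q$, and then invoke Baillon's fixed-point theorem (that the fixed-point set of a nonexpansive self-map of a bounded hyperconvex space is nonempty and hyperconvex) before finishing with Lemma~\ref{Lem:injective-balls}. The paper instead works only with the decreasing-intersection statement it actually records (Theorem~\ref{Thm:IS-TheoremBaillon}) and rebuilds the rest by hand in three steps: first a Banach-type contraction argument for $\lambda$-Lipschitz bounds, $\lambda<1$, with finitely many nontrivial constraints, where the sequential composition $T=\rho_N\circ\cdots\circ\rho_1$ of coordinate clips is iterated and $T^m(x)$ is shown to be Cauchy; then an approximation $\lambda_k=1-\tfrac1k\to 1$ producing a decreasing sequence $Q_k$ handled by Theorem~\ref{Thm:IS-TheoremBaillon}; and finally a Zorn's lemma argument over finite subsets of $I$ to pass to infinite index sets. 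What your approach buys is brevity and uniformity: the single map $\Phi$ handles all of $I$ (finite or not) at once, so the $\lambda$-approximation and the Zorn step disappear. What the paper's approach buys is self-containedness: it never needs the fixed-point theorem, only the intersection theorem it states, and its hard analytic core is an elementary contraction iteration.

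One caveat: your closing remark that you ``never need the existence half'' of the fixed-point theorem is misleading, and your sketch of how to recover the structural half from Theorem~\ref{Thm:IS-TheoremBaillon} alone is too thin. Hyperconvexity of $\Fix(\Phi)$ is itself an existence assertion --- given balls centered at fixed points with admissible radii, one must produce a fixed point inside their intersection --- and the standard proof does exactly that: the intersection is a nonempty, bounded, $\Phi$-invariant admissible set, and one applies the \emph{existence} theorem to $\Phi$ restricted to it. That existence theorem in turn needs not only the decreasing-intersection theorem (to run Zorn) but also the minimal-invariant-set/Chebyshev-center argument showing a minimal invariant admissible set is a singleton. None of this breaks your proof, since the full statement you need is precisely Baillon's theorem in the cited reference \cite{Bai} (see also \cite{EspK}); it just means your argument genuinely imports that theorem rather than rederiving it from the paper's stated ingredients.
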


\begin{proof}
Let $\mathfrak{R}$ denote the set $\{\underline{r}_{i} : i \in I\} \cup \{ \overline{r}_{i} : i \in I \}$. Let $\pi_i \colon l_{\infty}(I) \to \R$ be the $i$-th coordinate projection. We divide the proof into three steps. \\

\noindent \textbf{First Step}: We first show the statement in the case $\mathfrak{R}$ is a set of $\lambda$-Lipschitz functions for some $\lambda \in [0,1)$ and $F$ is a finite subset of $I$ such that $\underline{r}_{i}=-\infty, \bar{r}_{i}=\infty$ for all $i\in I\setminus F$.  Thus we only have a finite number of non-trivial inequalities.
Assume without loss of generality that $F = \{1,\dots,N\}$.
For $i \in F$ and any $x \in l_\infty(I)$, let us define $\rho_i \in \mathrm{Lip}_{1}(l_\infty(I), l_\infty(I))$ implicitly through
\[
\pi_j \circ \rho_i(x) = 
\begin{cases}
\min \bigl \{ \max \{ x_i, (\underline{r}_i \circ \widehat{\pi}_i)(x) \} , (\overline{r}_i \circ \widehat{\pi}_i)(x) \bigr \} &\text{if }  j=i , \\
x_j &\text{otherwise}.
\end{cases} 
\]
Moreover, set $G_0 := \mathrm{id}_{l_\infty(I)}$, as well as
\[
G_i := \rho_i \circ \cdots \circ \rho_{1}
\]
and 
\[
T := G_N = \rho_{N} \circ \cdots \circ \rho_{1}.
\]
Fix now $x \in l_\infty(I)$. We show that $(T^m(x))_{m \in \mathbb{N}}$ converges to a fixed point of $T$. Let us define the maps $\{f_i\}_{i \in F} \subset \mathrm{Lip}_{\lambda}(l_\infty(I),\R)$ by 
\[
f_i \ : \ y \mapsto \min \bigl \{ \max \{\alpha_i, (\underline{r}_i \circ \widehat{\pi}_i)(y)  \}, (\overline{r}_i\circ \widehat{\pi}_i)(y) \bigr \},
\]
where $\alpha_i:=(\pi_i \circ G_{i-1} \circ T^m)(x) = (\pi_i \circ G_i \circ T^{m-1})(x)$. We further set 
\[
\beta_i := \left|\pi_i \Bigl( ( G_i \circ T^m)(x) - T^{m}(x) \Bigr) \right|
\]
for any $i \in F$ and observe that 
\begin{align*}
\beta_i 
&= \left|\pi_i \Bigl((G_i \circ T^{m})(x) - (G_i \circ T^{m-1})(x) \Bigr) \right| \\
&= \left|\pi_i \Bigl((G_i \circ T^{m})(x) - (\rho_i \circ G_i \circ T^{m-1})(x) \Bigr) \right| \\
&= \left|(f_i \circ G_{i-1} \circ T^m)(x) - (f_i \circ G_i \circ T^{m-1})(x) \right| \\
&\le \lambda \left\| ( G_{i-1} \circ T^{m})(x) - ( G_i \circ T^{m-1})(x) \right\| \\
&\le \lambda \left\|  ( G_{i-1} \circ T^m)(x) - (G_{i-1} \circ T^{m-1})(x)  \right\| \\
&\le \lambda \left\|  T^m(x) - T^{m-1}(x)  \right\|.
\end{align*}
Thus
\[
 \left\|  T^{m+1}(x) - T^{m}(x)  \right\| \le \max_{i \in F} \beta_i \le \lambda \left\|  T^{m}(x) - T^{m-1}(x)  \right\|.
\]
It easily follows that $(T^m(x))_{m \in \mathbb{N}}$ is a Cauchy sequence and thus converging to a fixed point $x^*$ of $T$. This implies in particular that $x^* \in Q$. Hence, we can define the $1$-Lipschitz retraction $\rho \colon l_\infty(I) \to Q$ to be the pointwise limit of the sequence $(T^{m})_{m \in \mathbb{N}}$. It follows that $Q$ is injective.\\

\noindent \textbf{Second Step}: We now prove the statement in case the functions in $\mathfrak{R}$ are only assumed to be $1$-Lipschitz but keeping the assumption about the finite subset $F\subset I$. Moreover, we assume without loss of generality that $0 \in Q$. By Lemma~\ref{Lem:injective-balls}, it is enough to show that for any $R > 0$, the set $Q \cap B(0,R)$ is injective. For each $i \in I$, we set
\begin{align*}
(\underline{s}_i \circ \widehat{\pi}_i)(x) &:= \min \{ \max \{ (\underline{r}_i \circ \widehat{\pi}_i)(x),-R\},R\}\\
(\overline{s}_i \circ \widehat{\pi}_i)(x) &:= \min \{ \max \{ (\overline{r}_i \circ \widehat{\pi}_i)(x),-R\},R\}.
\end{align*}
Using $0 \in Q$, a short calculation yields that $\underline{r}_i \le \overline{r}_{i}$ implies $-R \le \underline{s}_i \le \overline{s}_{i} \le R$. Set
\[
P := \Bigl \{ x \in B(0,R) : (\underline{s}_i \circ \widehat{\pi}_i)(x) \le x_i \le (\overline{s}_i \circ \widehat{\pi}_i)(x) \text{ for all } i \in F \Bigr \}.
\]
Since the functions $\underline{s}_i$ and $\overline{s}_i$ are $1$-Lipschitz and using that $0 \in Q$ again, one has
\[
P = Q \cap B(0,R).
\]
We can thus, for $k \in \mathbb{N}$ and $i \in F$, set $\lambda_k := 1 - \frac{1}{k}$ and define $\underline{s}_i^k,\overline{s}_i^k$ through
\begin{align*}
(\underline{s}_i^k \circ \widehat{\pi}_i)(x) &= \lambda_k \bigl[ (\underline{s}_i \circ \widehat{\pi}_i)(x) + R \bigr] - R, \\
(\overline{s}_i^k \circ \widehat{\pi}_i)(x) &= \lambda_k \bigl[ (\overline{s}_i \circ \widehat{\pi}_i)(x) - R \bigr] + R.
\end{align*}
Note that 
\[
-R \le \underline{s}^k_i \le \underline{s}_i \le \overline{s}_{i}  \le \overline{s}^k_{i} \le R.
\]
For any $k \in \mathbb{N}$, we now set
\[
Q_k := \Bigl \{ x \in B(0,R) : (\underline{s}_i^k \circ \widehat{\pi}_i)(x) \le x_i \le (\overline{s}_i^k \circ \widehat{\pi}_i)(x) \text{ for all } i \in F \Bigr \}.
\]
The functions in $\mathfrak{R}^k := \{\underline{s}_i^k : i \in F \} \cup \{\overline{s}_{i}^k : i \in F \}$ are all $\lambda_k$-Lipschitz. Hence, we can apply the first step and define the $1$-Lipschitz retraction $\rho^k \colon B(0,R) \to Q_k$ to be the pointwise limit of the sequence $(T^{m,k})_{m \in \mathbb{N}}$. Since $B(0,R)$ is injective, it follows that $Q_k $ is injective. Finally, since the sequence $(Q_k)_{k \in \mathbb{N}}$ is decreasing for the inclusion and
\[
\bigcap_{k \in \mathbb{N}} Q_k = P = Q \cap B(0,R) ,
\]
it follows that $Q \cap B(0,R)$ is injective by Theorem~\ref{Thm:IS-TheoremBaillon}. So  
\[
Q=\left \{ x \in l_\infty(I) : (\underline{r}_i \circ \widehat{\pi}_i)(x)  \le x_i \le (\overline{r}_i \circ \widehat{\pi}_i)(x) \text{ for all } i\in F \right \}
\]
is injective by Lemma~\ref{Lem:injective-balls}.\\

\noindent \textbf{Third Step}: Let $\mathcal{F}$ be the family of all finite subsets of $I$. For every $F \in \mathcal{F}$, let
\[
Q^F := \left \{ x \in l_\infty(I) : (\underline{r}_i \circ \widehat{\pi}_i)(x)  \le x_i \le (\overline{r}_i \circ \widehat{\pi}_i)(x) \text{ for all } i\in F \right \}.
\]
As it is shown just above, $Q^F$ is injective. Therefore, for every $R \in [0,\infty)$, the set $A^F := Q^F \cap B(0,R)$ is injective by Lemma~\ref{Lem:injective-balls}. Let 
\[
\mathcal{M} := \left \{ J \subset I : A^{J\cup F} \text{ is injective for all $F\in\mathcal{F}$} \right \}
\]
be partially ordered by inclusion. By the second step, $\emptyset \in \mathcal{M}$. Moreover, if $F \in \mathcal{F}$ and if $(J_{\gamma})_{\gamma \in \Gamma}$ is a chain in $\mathcal{M}$, we can set $J_{\Gamma} := \bigcup_{\gamma \in \Gamma} J_{\gamma}$ and we obtain by Theorem~\ref{Thm:IS-TheoremBaillon} that $A^{J_\Gamma\cup F} = \bigcap_{\gamma \in \Gamma} A^{J_{\gamma} \cup F}$ which is injective by Theorem~\ref{Thm:IS-TheoremBaillon}. We can thus use Zorn's lemma to deduce the existence of a maximal element $M \in \mathcal{M}$. By maximality, it follows that $M = I$, which implies that the set  
\[
 A^I = Q \cap B(0,R)
\]
is injective as well. Again by Lemma~\ref{Lem:injective-balls}, it follows that $Q$ is injective and this concludes the proof.
\end{proof}

\begin{Rem}\label{Rem:ISLI-ExtremalFunctionsDistances}
Let $X$ and $Y$ be metric spaces, and let $i \colon X \to Y$ be an isometric embedding. As stated in \cite[(3)~in~Proposition~3.4]{Lan}, the following are equivalent:
\begin{enumerate}[(i)]
\item $(i,Y)$ is an injective hull of $X$.
\item $(i,Y)$ is a minimal injective extension of $X$, that is, $Y$ is injective and no proper subspace of $Y$ containing $i(X)$ is injective.
\end{enumerate}
It follows that if $X$ is an injective metric space, then $\e(X) = \E X$.  In our case it follows that if $Q \subset l_{\infty}(I)$ is injective, then the distance functions $\{ d_q \}_{q \in Q}$ are the only extremal functions in $\Delta(Q)$. Therefore, given any point $x \in l_{\infty}(I) \setminus Q$, the function $d_x \in \R^Q$ given by the assignement $q \mapsto \| x - q \|$ verifies $d_x(Q) \in (0,\infty)$, thus $d_x \notin \e(Q) = \E Q $, in other words $d_x \in \Delta(Q)$ is not extremal.
\end{Rem}

In the context of subsets of $l_{\infty}(I)$, we need to introduce some pieces of notation. Recall that $\pi_i \colon l_{\infty}(I) \to \R$ is the $i$-th coordinate projection. For $i \in I$, we set 
\[
C_i := \{ x \in  l_{\infty}(I) : x_i = \| x \| \}.
\]
For $S \subset  l_{\infty}(I)$, we define $-S := \{ -s : s \in S\}$ and we write $p + S$ for the set $\{ p + s : s \in S \}$, i.e.~the translate of $S$ by $p$. Note that the interior of $C_i$ satisfies
\[
\mathrm{Interior}(C_i) := \Biggl \{ x \in  l_{\infty}(I) : x_i > \sup_{j \in I \setminus \{ i \}} | x_j | \Biggr \}.
\]
The next proposition is the last piece needed to prove Theorem~\ref{Thm:ISLI-CharacterizationTheorem}. 

\begin{Prop}\label{Prop:p2}
If $Q \subset l_{\infty}(I)$ is injective, then $Q$ satisfies the assumptions of Proposition~\ref{Prop:p1}.
\end{Prop}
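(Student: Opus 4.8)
\emph{Approach.} The plan is to read the bounds off $Q$ fibrewise. For $i\in I$ and $y\in l_\infty(I\sm\{i\})$ consider the fibre $Q_i(y):=\{x\in Q:\widehat{\pi}_i(x)=y\}$ and set $\overline{r}_i(y):=\sup\{x_i:x\in Q_i(y)\}$ and $\underline{r}_i(y):=\inf\{x_i:x\in Q_i(y)\}$ on the image $\widehat{\pi}_i(Q)$, to be extended to all of $l_\infty(I\sm\{i\})$ below. Since $Q$ is injective it is hyperconvex, hence complete, hence closed in $l_\infty(I)$, so $d(p,Q)>0$ for every $p\notin Q$. With these definitions the inclusion $Q\subset\{x:\underline{r}_i(\widehat{\pi}_i x)\le x_i\le\overline{r}_i(\widehat{\pi}_i x)\ \forall i\}$ is immediate. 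It remains to show that the $\overline{r}_i,\underline{r}_i$ are $1$-Lipschitz with $\underline{r}_i\le\overline{r}_i$, and that the reverse inclusion holds.

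\emph{Reverse inclusion via the extremal-function remark.} First I would handle the reverse inclusion, which is where Remark~\ref{Rem:ISLI-ExtremalFunctionsDistances} does the work. Let $p\notin Q$. By that remark $d_p\in\Delta(Q)$ is not extremal, so by~\eqref{eq:ISLI-EX} it is not its own conjugate: there are $q_0\in Q$ and $\eps>0$ with $\|q_0-q\|\le\|p-q_0\|+\|p-q\|-\eps$ for all $q\in Q$. Put $d:=\|p-q_0\|$ and pick a coordinate $i$ with $|p_i-(q_0)_i|=d$; assume $p_i-(q_0)_i=d$ (the opposite sign is symmetric and violates the lower bound). For any $x\in Q$ with $\widehat{\pi}_i(x)=\widehat{\pi}_i(p)$ one has $\|p-x\|=|p_i-x_i|$, and if $x_i\ge p_i$ then $\|q_0-x\|\ge x_i-(q_0)_i=x_i-p_i+d$, so the displayed inequality with $q=x$ gives $x_i-p_i+d\le d+(x_i-p_i)-\eps$, i.e.\ $0\le-\eps$, a contradiction. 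Hence every $x\in Q_i(\widehat{\pi}_i p)$ has $x_i<p_i$, and since $Q$ is closed in fact $x_i\le p_i-d(p,Q)$. Thus $\overline{r}_i(\widehat{\pi}_i p)<p_i$ and $p$ is excluded.

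\emph{The main obstacle: the Lipschitz property.} I expect the genuinely hard step to be that $\overline{r}_i$ (and $\underline{r}_i$) are $1$-Lipschitz on $\widehat{\pi}_i(Q)$. Concretely, given $y,y'\in\widehat{\pi}_i(Q)$ and $x\in Q_i(y)$ with $x_i$ near $\overline{r}_i(y)$, one must produce $x'\in Q_i(y')$ with $x'_i\ge x_i-\|y-y'\|$; a plain two-ball use of hyperconvexity only pins down the $i$-th coordinate and leaves the remaining ones merely close to $y'$, so it does not land on the fibre. The natural remedy is to establish that an injective subset of $l_\infty(I)$ is externally hyperconvex in $l_\infty(I)$, and then to realise the constraints ``$\widehat{\pi}_i(x')=y'$ and $x'_i\ge x_i-\|y-y'\|$'' as an admissible family of balls intersected with $Q$. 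The same input shows each nonempty fibre $Q_i(y)$ is a closed interval whose endpoints are exactly $\underline{r}_i(y)\le\overline{r}_i(y)$. Proving (or locating in \cite{Mie}) this external hyperconvexity is the technical heart of the argument.

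\emph{Extension and ordering.} It remains to extend $\overline{r}_i,\underline{r}_i$ to $l_\infty(I\sm\{i\})$ as $\olReal$-valued $1$-Lipschitz maps with $\underline{r}_i\le\overline{r}_i$ everywhere, without enlarging the set. This is more delicate than it looks: the pointwise smallest/largest $1$-Lipschitz extensions make the reverse inclusion automatic but may cross (already for $Q=\{0\}$, where the fibres force the two bounds to overlap off the image), whereas the opposite choice keeps them ordered but produces a strictly larger set. An efficient way to organise both the extension and the ordering at once is to fix a \emph{monotone} $1$-Lipschitz retraction $\rho\colon l_\infty(I)\to Q$ and put
\[
\overline{r}_i(y):=\lim_{t\to+\infty}\pi_i\bigl(\rho(y\oplus_i t)\bigr),\qquad \underline{r}_i(y):=\lim_{t\to-\infty}\pi_i\bigl(\rho(y\oplus_i t)\bigr),
\]
where $y\oplus_i t$ is the point with $i$-th coordinate $t$ and remaining coordinates $y$. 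Monotonicity makes $t\mapsto\pi_i\rho(y\oplus_i t)$ nondecreasing, which at once yields existence of both limits in $\olReal$, the inequality $\underline{r}_i\le\overline{r}_i$, and (using $\rho|_Q=\id$) the inclusion $Q\subset\{x:\underline{r}_i(\widehat{\pi}_i x)\le x_i\le\overline{r}_i(\widehat{\pi}_i x)\}$, while the $1$-Lipschitz dependence on $y$ is inherited from $\rho$; one then checks that these bounds agree with the fibre endpoints on $\widehat{\pi}_i(Q)$ so that the reverse inclusion of the second paragraph still applies. The remaining point of care is therefore the existence of a monotone $1$-Lipschitz retraction onto an injective subset of $l_\infty(I)$, which I expect to follow from the lattice structure of $l_\infty(I)$ together with Isbell's construction.
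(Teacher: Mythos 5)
Your proposal correctly identifies where the difficulty lies (the $1$-Lipschitz property of the bounds and the extension/ordering problem), and your second paragraph runs parallel to the paper's first move: your $q_0$ and $\eps$ obtained from non-extremality of $d_p$ are exactly the paper's $p_x$ and $\eps(x)$ from Remark~\ref{Rem:ISLI-ExtremalFunctionsDistances}. But the two steps you defer are not merely technical, and the remedies you propose for them are based on false statements. First, the claim that an injective subset of $l_{\infty}(I)$ is externally hyperconvex in $l_{\infty}(I)$ is false. Take $I=\{1,2\}$ and let $D:=\{(t,t):t\in\R\}$ be the diagonal in $\R^2$ with the maximum norm; $D$ is isometric to $\R$, hence injective. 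For $x_1=(2,0)$, $x_2=(0,-2)$, $r_1=r_2=1$ one has $\|x_1-x_2\|=2=r_1+r_2$ and $d(x_1,D)=d(x_2,D)=1$, yet $B(x_1,1)\cap B(x_2,1)=\{(1,-1)\}$, which misses $D$. So the property you want to "prove or locate in \cite{Mie}" does not hold, and the Lipschitz step cannot be obtained this way. (There is also a circularity hidden in your reduction: to encode the constraint $\widehat{\pi}_i(x')=y'$, $x_i'\ge x_i-\|y-y'\|$ as an admissible ball family, each ball center must lie within its radius of $Q$, and certifying that already requires knowing the fibre over $y'$ contains a point with large $i$-th coordinate, which is the statement being proved.) Second, the extension step: if "monotone" means order-preserving for the coordinatewise order, no such retraction need exist. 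For the anti-diagonal $A:=\{(t,-t):t\in\R\}$ (again injective), any retraction has the form $\rho(x)=(f(x),-f(x))$, and order-preservation forces $f(x)=f(y)$ whenever $x\le y$; since for $t\ge 0$ one has $(t,-t)\le(t,t)$, $(-t,-t)\le(t,-t)$ and $(-t,-t)\le(-t,t)$, the retraction property gives $t=f(t,-t)=f(-t,-t)=f(-t,t)=-t$, a contradiction for $t>0$. If instead "monotone" only means that $t\mapsto\pi_i(\rho(y\oplus_i t))$ is nondecreasing, such retractions exist (for $A$, $\rho(x_1,x_2)=(\tfrac{x_1-x_2}{2},\tfrac{x_2-x_1}{2})$), but then the limits need not reproduce the fibre endpoints: here $\lim_{t\to+\infty}\pi_1(\rho(y\oplus_1 t))=+\infty$ while the fibre endpoint over $y$ is $-y$, so the resulting set is all of $\R^2\ne A$ and the reverse inclusion — the "one then checks" in your last paragraph — fails. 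A further, minor and fixable, point: for infinite $I$ the coordinate with $|p_i-(q_0)_i|=\|p-q_0\|$ need not exist, since the supremum defining the norm need not be attained; the paper handles this with a parameter $\delta$, choosing $i$ with $|p_i-(q_0)_i|\ge\|p-q_0\|-\delta$ and $\delta$ small relative to $\eps$, and your fibre computation survives this weakening.

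For contrast, the paper's proof never takes fibrewise suprema, and this is precisely what lets it avoid both obstacles. From non-extremality it manufactures, for each $x\notin Q$, a translated cone $C_x=x\mp\alpha\eps(x)e^i\pm C_i$ (with $i$ a $\delta$-almost-maximizing coordinate of $x-p_x$) that contains $x$ in its interior and is disjoint from $Q$; the bounds are then \emph{defined} as $\overline{r}_i:=\inf$ and $\underline{r\!}\,_i:=\sup$ of the explicit $1$-Lipschitz functions $y\mapsto\|\widehat{\pi}_i(x)-y\|+\pi_i(x)\mp\alpha\eps(x)$ contributed by these cones, so $1$-Lipschitzness comes for free, $Q$ lies inside the resulting set because every cone misses $Q$, the reverse inclusion holds because every exterior point violates its own cone's inequality, and the only thing left to prove is the ordering $\underline{r\!}\,_i\le\overline{r}_i$, done by showing that an upward and a downward cone at the same coordinate cannot have intersecting interiors when $\alpha<\tfrac18$. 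If you wish to salvage the fibrewise approach, you would have to prove the $1$-Lipschitz bound on fibre suprema and the ordered extension directly, without external hyperconvexity and without monotone retractions; the cone construction is exactly the device that makes both issues disappear.
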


\begin{proof}
The injective subsets of $\R$ are exactly the closed intervals and the result clearly holds in this case. Therefore, assume that $|I| \ge 2$. By Remark~\ref{Rem:ISLI-ExtremalFunctionsDistances}, we may assign to every $x \in l_{\infty}(I) \setminus Q$ the positive quantity
\begin{align*}
&\eps (x) := \\
&\sup\{\eps \in \R \,|\, \text{there is }p\in Q \text{ with } \|x-p\|+\|x-q\|\geq \|p-q\|+\eps \text{ for all } q\in Q \}.
\end{align*}
Choosing $q\in Q$ such that $\|x-q\|=d(x,Q)$, one can easily see that $\eps(x)\leq 2 d(x,Q)$. For every $x \in l_{\infty}(I) \setminus Q$, let $p_x \in Q$ be such that for every $q\in Q$, one has 
\[
\inf_{q \in Q} \Bigl(\|x-p_x\|+\|x-q\| - \|p_x-q\| \Bigr) \ge \tfrac{\eps(x)}{2}.
\]
Next we select a cone $C_x$ for every $x\in l_\infty^n\setminus Q$. To that end, let $\alpha$ be some positive real parameter. We determine the value of $\alpha$ in the course of the proof. For any $\delta \in (0, \tfrac{1}{2} \|x-p_x\|)$, one can find $i \in I$ such that 
\[
|\pi_i(x - p_x)|  \ge \|x-p_x\| - \delta > 0.
\]
($\delta$ is the additional parameter needed to generalize from finite to infinite index sets~$I$.)
Now let $e^i \in l_{\infty}(I)$ be given by $e^i_j = \delta_{ij}$ i.e. $e^i$ is everywhere equal to zero except at $i$ where it is equal to one. We set  
\[
C_x := 
\begin{cases}
x-\alpha\eps(x)e^i +C_i &\text{if }  \pi_i(x - p_x) > 0 , \\
x+\alpha\eps(x)e^i - C_i &\text{if }  \pi_i(x - p_x) < 0.
\end{cases}
\]
Observe that $x\in\operatorname{Interior}(C_x)$ holds for every $x$. Assume that we are in the case $C_x:=x-\alpha\eps(x)e^i + C_i$ (the case $C_x:=x+\alpha\eps(x)e^i-C_i$ is analogue). Assume that $Q\cap C_x \neq \emptyset$ and pick $q$ in this intersection, one then has 
\[
w:= q + \alpha \eps(x)e^i \in C_x + \alpha\eps(x)e^i = x + C_i.
\]
Therefore $\|w - x\| = \pi_i(w - x)$ and thus
\begin{align*}
\|w - p_x\| \ge | \pi_i(w - p_x) | 
= |\pi_i(w - x + x - p_x) |  
&= |\pi_i(w - x)| + |\pi_i(x - p_x) | \\
&\ge \|w - x\| + \|x - p_x \| - \delta.
\end{align*}
Consequently
\[
\|p_x-q\|\geq \|p_x-x\|+\|x-q\|-2\alpha\eps(x)-\delta.
\]
Choosing $\alpha < \tfrac{1}{8}$ and $\delta < \tfrac{\eps(x)}{4}$ we obtain a contradiction to the definition of $p_x$. Hence, we do so and obtain $Q\cap C_x = \emptyset$ for all $x\in l_{\infty}(I) \setminus Q$. 

For every $i$, the function $\bar{r}_i$ is defined to be the pointwise infimum over the family of $1$-Lipschitz functions $l_{\infty}(I \setminus \{ i \} ) \to \R$ defined by the assignement 
\[
y \mapsto \|\widehat{\pi}_i(x)-y\|+\pi_i(x)-\alpha\eps(x)
\]
where every $x$ such that $C_x=x-\alpha\eps(x)e^i+C_i$ contributes exactly one member. If there is no such $x$, we let $\bar{r}_i:=\infty$. Similarly, $\underline{r\!}\,_i:=-\infty$ if there is no $x$ with $C_x=x+\alpha\eps(x)e^i - C_i$ or otherwise the supremum over all functions
\[
y\mapsto\|\widehat{\pi}_i(x)-y\|+\pi_i(x)+\alpha\eps(x)
\]
for $x$ with $C_x=x+\alpha\eps(x)e^i - C_i$. It is not difficult to deduce from $x\in\operatorname{Interior}(C_x)$ and $Q\cap C_x = \emptyset$ that 
\[
Q = \{ x \in l_{\infty}(I) : \underline{r\!}\,_i(\widehat{\pi}_i(x)) \leq x_i
\leq
\bar{r}_i(\widehat{\pi}_i(x)) \text{ for } i \in I \}.
\]

It remains to be shown that the inequalities $\underline{r\!}\,_i\leq\bar{r}_i$ hold. First, note that if $\underline{r}_i(p)>\bar{r}_i(p)$ at some $p \in l_{\infty}(I \setminus \{ i \} )$, then there are points $x,y \in l_{\infty}(I) \setminus Q$ with $C_x:=x-\alpha\eps(x)e^i + C_i$ and $C_y:=y+\alpha\eps(y)e^i - C_i$ such that the intersection $\operatorname{Interior}(C_x)\cap\operatorname{Interior}(C_y)$ is non-empty. To show that this can not happen for appropriate choice of $\alpha$, we assume that $\operatorname{Interior}(C_x)\cap\operatorname{Interior}(C_y) \neq \emptyset$ and start by noting that the apex $x-\alpha\eps(x)e^i$ of $C_x$ lies in $\operatorname{Interior}(C_y)$. Therefore $x':=x-\alpha\eps(x)e^i-\alpha\eps(y)e^i$ lies in $\operatorname{Interior}(y - C_i)$ and the same holds for $p_x':=p_x-\alpha\eps(x)e^i-\alpha\eps(y)e^i$ since $p_x\in x - C_i$.

\noindent So we have 
\begin{align*}
\|x-p_x\|+\|x-p_y\|&\leq \|x'-p_x'\|+\|x'-p_y\|+\alpha(\eps(x)+\eps(y)) \\
&= \|p_x'-p_y\|+\alpha(\eps(x)+\eps(y)) \\
& \leq \|p_x-p_y\|+2\alpha(\eps(x)+\eps(y)),
\end{align*}
hence by definition of $p_x$ and $p_y$ this leads to $\eps(x)\leq 4\alpha(\eps(x)+\eps(y))$ and $\eps(y)\leq 4\alpha(\eps(x)+\eps(y))$, respectively. Now take $\alpha<\tfrac{1}{8}$. The sum of the last two inequalities involving $\eps(x)$ and $\eps(y)$ then yields a contradiction. Therefore, $\operatorname{Interior}(C_x)\cap\operatorname{Interior}(C_y)$ is empty and this finishes the proof.
\end{proof}

\section{Examples}

We start with a remark.

\begin{Rem}
Any codimension one linear subspace $V$ of $l_{\infty}(I)$ is injective if and only if there is an $i \in I$ such that 
\begin{equation}\label{eq:ISLI-EmptyConesCodimensionOne}
V \subset l_{\infty}(I) \setminus ( \mathrm{Interior}(-C_i) \cup \mathrm{Interior}(C_i) ).
\end{equation}
We first show that if $V$ is injective, there exists a coordinate $i$ as in \eqref{eq:ISLI-EmptyConesCodimensionOne}. Assume that the converse holds, namely that for every $j \in I$, one can pick
\[
v^j \in V \cap  ( \mathrm{Interior}(-C_j) \cup \mathrm{Interior}(C_j) ),
\]
Choose arbitrarily $p \in l_{\infty}(I)$ and assume without loss of generality that $v^j \in V \cap \mathrm{Interior}(C_j)$. Note that for any $\alpha \in [0,\infty)$, one has $\alpha v^j \in V \cap \mathrm{Interior}(C_j)$. Let $A(v^j):= |\pi_j(v^j)|$ and $B(v^j):= \sup_{i \in I \setminus \{ j \}} |\pi_i(v^j)|$. Clearly, there is then an $\eps \in [0,\infty)$ such that $A(v^j) \ge (1+\eps) B(v^j)$. A short calculation shows that putting $\alpha_j := \tfrac{2 \| p \|}{\eps B(v^j)}$, one obtains $A(\alpha_j v^j) - B(\alpha_j v^j) \ge 2 \| p \|$. By choice of $\alpha_j$, it then follows that
\begin{align*}
|\pi_j(p_j - \alpha_j v^j)| &= \| p_j - \alpha_j v^j \| \\
|\pi_j(p_j + \alpha_j v^j)|&= \| p_j + \alpha_j v^j \|.
\end{align*}
Therefore, one infers that 
\[
\{ p \} = \bigcap_{j \in I} \Bigl( B( \alpha_j v^j, \| p - \alpha_j v^j \|) \cap B( -\alpha_j v^j, \| p + \alpha_j v^j \|) \Bigr).
\]
Picking $p \notin V$, it follows that $V$ is not hyperconvex and thus not injective. This shows that if $V$ is injective, such a coordinate as in \eqref{eq:ISLI-EmptyConesCodimensionOne} exists. Conversely, if such a coordinate $i$ exists, then $V$ can be expressed as
\begin{equation}\label{eq:ISLI-ExpresionConesV}
V = \Bigl \{ x \in l_\infty(I) : (\underline{r}_i \circ \widehat{\pi}_i)(x)  \le x_i \le (\overline{r}_i \circ \widehat{\pi}_i)(x) \Bigr \}
\end{equation}
where 
\[
(\underline{r}_i \circ \widehat{\pi}_i)(x) := \sup_{ y \in l_{\infty}(I)  \setminus V} (y_i - \| \widehat{\pi}_i(x) - \widehat{\pi}_i(y) \|) 
\]
and
\[
(\overline{r}_i \circ \widehat{\pi}_i)(x) := \inf_{ y \in l_{\infty}(I)  \setminus V} (y_i + \| \widehat{\pi}_i(x) - \widehat{\pi}_i(y) \|) .
\]
It is easy to see by \eqref{eq:ISLI-EmptyConesCodimensionOne} that $V$ is a subset of the right-hand side of \eqref{eq:ISLI-ExpresionConesV}. Now, we prove that the complement of $V$ is contained in the complement of the right-hand side of \eqref{eq:ISLI-ExpresionConesV}. Consider the map $\pi^i_V \colon l_\infty(I) \to V$ which corresponds to the projection onto $V$ along the $i$-th coordinate which is a well-defined map since $i$ satisfies \eqref{eq:ISLI-ExpresionConesV}. For any $y \notin V$, one either has $\pi_i(y) > \pi_i(\pi^i_V(y))$ which implies $\pi_i(y) > (\overline{r}_i \circ \widehat{\pi}_i)(y)$ or $\pi_i(y) < \pi_i(\pi^i_V(y))$ which implies $\pi_i(y) < (\underline{r}_i \circ \widehat{\pi}_i)(y)$. This proves the desired inclusion and thus that \eqref{eq:ISLI-ExpresionConesV} holds. By Proposition~\ref{Prop:p1}, it follows that $V$ is injective and this finishes the proof of the equivalence.
\end{Rem}

\begin{Expl}
In case $I:=\mathbb{N}$ we consider the set $V:= \mathrm{ker}(\Lambda)$ where $\Lambda \colon l_{\infty}(\mathbb{N}) \to \R$ denotes a real Banach limit, namely $\Lambda \in ( l_{\infty}(\mathbb{N}))^*$ satisfies the following propeties
\begin{enumerate}
\item Let $x := (x_n)_{n \in \mathbb{N}}$ be a sequence with non-negative terms, then $\Lambda(x) \ge 0$. 
\item If $S \colon  l_{\infty}(\mathbb{N}) \to  l_{\infty}(\mathbb{N})$ denotes the left-shift operator given by the relation $\pi_n \circ S = \pi_{n+1}$, one has $\Lambda \circ S = \Lambda$.
\item For every convergent sequence $x := (x_n)_{n \in \mathbb{N}}$, one has $\Lambda(x) = \lim_{n \rightarrow \infty} x_n$.
\end{enumerate}
One can see that by invariance of the  Banach limit under left-shift, $V$ contains all sequences having only finitely many non-zero entries. It is then easy to see that there is no $i \in \mathbb{N}$ satisfying \eqref{eq:ISLI-EmptyConesCodimensionOne} and thus $V$ is not injective.
\end{Expl}

\begin{Expl}
In case $I:=\mathbb{N}$ we can also consider the set $V:= \mathrm{ker}(\Phi)$ where $\Phi \colon l_{\infty}(\mathbb{N}) \to \R$ denotes an element in $\alpha(l_1(\mathbb{N}))$ with $\alpha \colon l_1(\mathbb{N}) \to (l_{\infty}(\mathbb{N}))^{*}$ standing for the canonical isometric embedding induced by the one of $l_1(\mathbb{N})$ into its double dual. It is then easy to see that \eqref{eq:ISLI-EmptyConesCodimensionOne} holds if and only if $\| \alpha^{-1}(\Phi) \|_1 \le 2 \| \alpha^{-1}(\Phi) \|_{\infty}$, compare to \cite{Moe,Pav}.
\end{Expl}



\end{document}